\documentclass[a4paper,12pt]{article}
%%%%%%%%%%%%%%%%%%%%%%%%%%%%%%%%%%%%%%%%%%%%%%%%%%%%%%%%%%%%%%%%%%%%%%%%%%%%%%%%%%%%%%%%%%%%%%%%
\usepackage{amsmath}
\usepackage{amssymb}
\usepackage{amsfonts,times}
\usepackage{theorem}
\usepackage{hyperref}
\AtBeginDocument{}
\setcounter{MaxMatrixCols}{10}
\newtheorem{theorem}{Theorem}

\newtheorem{corollary}[theorem]{Corollary}

\newtheorem{definition}[theorem]{Definition}

\newtheorem{lemma}[theorem]{Lemma}

\newtheorem{proposition}[theorem]{Proposition}
{\theorembodyfont{\upshape}\newtheorem{remark}[theorem]{Remark}}
{\theorembodyfont{\upshape}}

\newenvironment{proof}[1][Proof]{\noindent\textbf{#1.} }{\ \hfill \rule{0.5em}{0.5em}}

\newcommand{\cF}{\mathcal{F}}

\newcommand{\cP}{\mathcal{P}}

\newcommand{\IN}{\mathbb{N}}
\newcommand{\IE}{\mathbb{E}}
\newcommand{\IP}{\mathbb{P}}
\newcommand{\IR}{\mathbb{R}}

\setlength{\marginparwidth}{3cm}
\newcommand{\oo}[1]{\frac{1}{#1}}
\begin{document}

\title{An elementary proof of de Finetti's Theorem}
\author{Werner Kirsch
\\
%EndAName
Fakult\"{a}t f\"{u}r Mathematik und Informatik\\
FernUniversit\"{a}t in Hagen, Germany}
%\date{}

\maketitle
\begin{abstract}
 A sequence of random variables is called exchangeable if the joint distribution of the sequence is unchanged by any permutation of the indices.
De Finetti's theorem characterizes all $\{0,1\}$-valued exchangeable sequences as a `mixture' of sequences of independent random variables.
 
 We present an new, elementary proof of de Finetti's Theorem. The purpose of this paper is to make this theorem accessible to a broader community through an essentially self-contained proof.
\end{abstract}

\section{Introduction}
\begin{definition}
   A finite sequence of (real valued) random variables $X_{1},X_{2},\ldots,X_{N}$ on a probability space $(\Omega,\cF,\IP) $ is called \emph{exchangeable}, if for any permutation $\pi$ of $\{1,2,\ldots,N\}$ the
   distributions of $X_{\pi(1)},X_{\pi(2)},\ldots,X_{\pi(N)}$ and $X_{1},X_{2},\ldots,X_{N}$ agree, i.\,e. if for any Borel sets $A_{1},A_{2},\ldots,A_{N}$
   \begin{align}
      &\IP\big(X_{1}\in A_{1},X_{2}\in A_{2},\ldots,X_{N}\in A_{N}\big)\notag\\~=~&\IP(X_{\pi(1)}\in A_{1},X_{\pi(2)}\in A_{2},\ldots,X_{\pi(N)}\in A_{N})
   \end{align}

   An infinite sequence $\{X_{i}\}_{i\in\IN}$ is called exchangeable, if the finite sequences  $X_{1},X_{2},\ldots,X_{N}$ are exchangeable for any $N\in\IN$.
\end{definition}
Obviously, independent, identically distributed random variables are exchangeable, but there are many more examples of exchangeable sequences.

Let us denote by $\pi_{p}$ the (Bernoulli) probability measure on $\{0,1\}$ given by $\pi_{p}(1)=p$ and $\pi_{p}(0)=1-p$.
If the random variables $X_{i}$ are independent and distributed according to $\pi_{p}$, i.\,e. $\IP(X_{i}=1)=\pi_{p}(1)=p$
and $\IP(X_{i}=0)=\pi_{p}(0)=1-p$, then the probability distribution of the sequence $X_{1},\ldots,X_{N}$ is the product measure
\begin{equation}
   \cP_{p}~=~\bigotimes_{i=1}^{N}\;\pi_p\qquad\text{on}\quad\{0,1\}^{N}
\end{equation}

In 1931 B. de Finetti proved the following remarkable theorem which now bears his name:

\begin{theorem}[de Finetti's Representation Theorem]\label{deFin}
Let $X_{i}$ be an infinite sequence of $\{0,1\}$-valued exchangeable random variables then there exists a probability measure $\mu$ on $[0,1]$ such that
for any $N$ and any sequence $(x_{1},\ldots,x_{N})\in\{0,1\}^{N}$
\begin{align}
   \IP\big(X_{1}=x_{1},\ldots,X_{N}=x_{N}\big)~&=~\int\,\cP_{p}(x_{1},\ldots,x_{N})\,d\mu(p)\\
   &=~\int\,\prod_{i=1}^{N}\pi_{p}(x_{i})\,d\mu(p)
\end{align}

\end{theorem}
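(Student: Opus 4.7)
The plan is to construct $\mu$ as the distribution of the limiting empirical average of the sequence and to recover the product formula by conditioning on the finite partial sums $S_{n}:=X_{1}+\cdots+X_{n}$. Two ingredients are needed: the combinatorial rigidity that exchangeability imposes on a length-$n$ sample, and an $L^{2}$-law of large numbers for $S_{n}/n$, the latter being available precisely because the sequence can be extended to arbitrarily many terms.

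I would begin with the following consequence of exchangeability. Fix $N$ and $(x_{1},\ldots,x_{N})\in\{0,1\}^{N}$ with $j:=\sum_{i=1}^{N}x_{i}$. For any $n\geq N$, conditional on $S_{n}=k$ the tuple $(X_{1},\ldots,X_{n})$ is uniformly distributed over the $\binom{n}{k}$ binary strings with exactly $k$ ones, so a direct count gives
\begin{equation}
\IP\big(X_{1}=x_{1},\ldots,X_{N}=x_{N}\,\big|\,S_{n}=k\big)~=~\frac{\binom{n-N}{k-j}}{\binom{n}{k}}.
\end{equation}
An elementary expansion of the binomial coefficients shows that for fixed $N,j$ this ratio is uniformly (in $k$) close to $(k/n)^{j}(1-k/n)^{N-j}$ as $n\to\infty$, with an error of order $N^{2}/n$.

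The second ingredient is that $Y_{n}:=S_{n}/n$ is Cauchy in $L^{2}$. By exchangeability the mixed moments $\IE[X_{i}X_{j}]$ depend only on whether $i=j$, so a short calculation yields $\IE[Y_{n}Y_{m}]\to\IE[X_{1}X_{2}]$ and $\IE[Y_{n}^{2}]\to\IE[X_{1}X_{2}]$ as $n,m\to\infty$, whence $\IE[(Y_{n}-Y_{m})^{2}]\to 0$. Let $Y\in L^{2}$ be the limit (automatically $[0,1]$-valued) and let $\mu$ denote its distribution on $[0,1]$. Taking expectations in the conditional formula above, inserting the uniform approximation of the hypergeometric ratio, and letting $n\to\infty$ via bounded convergence yields
\begin{equation}
\IP(X_{1}=x_{1},\ldots,X_{N}=x_{N})~=~\IE\big[Y^{j}(1-Y)^{N-j}\big]~=~\int_{0}^{1}p^{j}(1-p)^{N-j}\,d\mu(p),
\end{equation}
which is exactly $\int\cP_{p}(x_{1},\ldots,x_{N})\,d\mu(p)$.

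The main obstacle is the $L^{2}$-convergence step: this is where \emph{infinite} exchangeability is genuinely used---the analogous theorem fails for finite exchangeable sequences---and it is also where one must control a random variable whose distribution is not known in advance. A secondary technical point is making sure the uniform error estimate for the hypergeometric ratio survives integration against the unknown law of $S_{n}/n$, but since the ratio lies in $[0,1]$ this is handled by dominated convergence once convergence in probability of $Y_{n}$ to $Y$ has been established.
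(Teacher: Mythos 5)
Your argument is correct, but it is a genuinely different proof from the one in the paper. The paper runs entirely on the method of moments: it first shows (via a counting argument and Weierstrass approximation) that $S_{N}/N$ converges \emph{in distribution} to a measure $\mu$ with $m_{k}(\mu)=\IE(X_{1}\cdots X_{k})$, then defines the candidate mixture process $Y_{i}$, and closes the loop by checking that the partial sums of $X$ and of $Y$ have identical moments, which by a counting lemma pins down all finite-dimensional distributions. You instead give the classical urn/hypergeometric proof: exchangeability makes $(X_{1},\ldots,X_{n})$ uniform on binary strings given $S_{n}=k$, the ratio $\binom{n-N}{k-j}/\binom{n}{k}$ is uniformly $\big(\tfrac{k}{n}\big)^{j}\big(1-\tfrac{k}{n}\big)^{N-j}+O(N^{2}/n)$, and an $L^{2}$-Cauchy argument (using that $\IE[X_{i}X_{j}]$ depends only on whether $i=j$) produces a limit $Y$ of $S_{n}/n$ whose law is taken as $\mu$. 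Your route buys more: convergence of $S_{n}/n$ in $L^{2}$ rather than merely in distribution, and a concrete realization of $\mu$ as the law of a random variable on the original space, with the mixture formula falling out of a single conditioning identity rather than a moment-matching step; the price is the binomial asymptotics. The paper's route avoids conditional distributions and $L^{2}$ estimates altogether, at the cost of invoking the moment method twice (once for existence of $\mu$, once for uniqueness of the law of $S_{N}$). Both proofs use infinite exchangeability in an essential and parallel way --- you in the $L^{2}$-Cauchy computation and in letting $n\to\infty$, the paper in letting $N\to\infty$ in the moment computation. Your steps all check out: the conditional uniformity is exactly exchangeability, the covariance computation gives $\IE[(Y_{n}-Y_{m})^{2}]\to 0$, and boundedness of the hypergeometric ratio by $1$ justifies the passage to the limit under the expectation.
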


Loosely speaking: An exchangeable sequence with values in $\{0,1\}$ is a `mixture' of independent sequences with respect to a measure $\mu$ on $[0,1]$. 

De Finetti's Theorem was extended in various directions, most notably to random variables with values in rather general spaces \cite{Hewitt}. For reviews
on the theorem see e.\,g. \cite{Aldous}, see also the textbook \cite{Klenke} for a proof.

The proof of Theorem \ref{deFin} we present here is very elementary. It is based on the method of moments which allows us to prove weak convergence of measures.

\medskip
\noindent\textbf{Acknowledgement} It is a pleasure to thank Michael Fleermann for careful proofreading and many helpful suggestions.

\section{Preliminaries}

For a probability measure $\mu$ on $\IR$ we define the $k^{th}$ moments by $m_{k}(\mu):=\int x^{k}\,d\mu(x)$ whenever the latter integral exists
(in the sense that $\int |x|^{k}\,d\mu(x)<\infty$).
In the following we will be dealing with measures with compact support so that all moments exist (and are finite). The following theorem is a
light version of the method of moments which is nevertheless sufficient for our purpose.

\begin{proposition}\label{bdsupp}\
\begin{enumerate}
\item\label{a} Let $\mu_{n}$ ($n\in\IN$) be probability measures with support contained in a (fixed) interval $[a,b]$.
If for all $k$ the moments $m_{k}(\mu_{n})$ converge to some $m_{k}$ then the sequence $\mu_{n}$ converges weakly
to a measure $\mu$ with moments $m_{k}(\mu)=m_{k}$ and with support contained in $[a,b]$.
\item\label{b} If $\mu$ is a probability measure with support contained in $[a,b]$ and $\nu $ is a probability measures on $\IR$ such that
$m_{k}(\mu)=m_{k}(\nu)$ then $\mu=\nu$.
\end{enumerate}

\end{proposition}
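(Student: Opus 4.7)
My plan is to prove part~\ref{b} first and then invoke it to identify the weak limit in part~\ref{a}.

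For \ref{b}, I would combine two ingredients: a moment-based bound that forces $\nu$ to have bounded support, and the Weierstrass approximation theorem. Setting $c:=\max(|a|,|b|)$, the hypothesis $\supp\mu\subseteq[a,b]$ gives $m_{2k}(\mu)\leq c^{2k}$, and since $\nu$ has the same moments, Markov's inequality yields
\[
\nu\bigl(\{|x|>c+\varepsilon\}\bigr)\;\leq\;\frac{m_{2k}(\nu)}{(c+\varepsilon)^{2k}}\;\leq\;\Bigl(\frac{c}{c+\varepsilon}\Bigr)^{2k}\;\xrightarrow[k\to\infty]{}\;0
\]
for every $\varepsilon>0$, so $\supp\nu\subseteq[-c,c]$. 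With both measures on the common compact interval $[-c,c]$, Weierstrass approximation lets me uniformly approximate any $f\in C([-c,c])$ by polynomials; since $\int p\,d\mu=\int p\,d\nu$ for every polynomial $p$, this upgrades to $\int f\,d\mu=\int f\,d\nu$ for all continuous $f$, forcing $\mu=\nu$.

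For \ref{a}, the strategy is a standard subsequence argument based on Helly's selection theorem. Because the $\mu_n$ are all supported in the compact set $[a,b]$ the family is tight, so every subsequence admits a further weakly convergent sub-subsequence $\mu_{n_{j_k}}\to\mu^*$, and by Portmanteau applied to the closed set $[a,b]$ the limit satisfies $\supp\mu^*\subseteq[a,b]$. Each monomial $x^k$ agrees on $[a,b]$ with some bounded continuous function on $\mathbb{R}$, so weak convergence gives $m_k(\mu^*)=\lim_k m_k(\mu_{n_{j_k}})=m_k$. By part~\ref{b}, $\mu^*$ is uniquely determined by this moment sequence; hence every weakly convergent sub-subsequence has the same limit $\mu$, and the usual subsequence-of-a-subsequence argument shows $\mu_n\to\mu$ weakly.

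The main obstacle I anticipate is the compact-support step in \ref{b}: Weierstrass cannot be invoked until $\nu$ is known to live on a bounded interval, and the moment-plus-Markov trick above is precisely the device that bridges this gap. Helly's selection theorem is somewhat technical but admits an elementary proof via a diagonal argument on distribution functions, so it may safely be used as a background tool.
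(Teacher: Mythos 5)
Your proof is correct, and it diverges from the paper's sketch in both parts in ways worth noting. For part \ref{a} the paper argues directly: since the moments converge and polynomials are uniformly dense in $C([a-1,b+1])$, the numbers $\int f\,d\mu_n$ form a Cauchy sequence for every continuous $f$, so $f\mapsto\lim_n\int f\,d\mu_n$ is a positive linear functional of norm one, which is then identified with integration against a probability measure; this avoids Helly's selection theorem and the subsequence-of-a-subsequence argument entirely, at the price of invoking the Riesz representation theorem, whereas your compactness route trades Riesz for Helly plus Portmanteau plus the uniqueness statement of part \ref{b}. Both are standard and both work. For part \ref{b} your argument is actually more complete than the paper's: the paper simply applies Weierstrass on a fixed compact interval and asserts that $\int f\,d\nu$ is determined by the moments, silently assuming $\nu$ is supported there, while $\nu$ is a priori only a probability measure on $\mathbb{R}$. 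The Markov-inequality estimate $\nu(\{|x|>c+\varepsilon\})\le m_{2k}(\nu)/(c+\varepsilon)^{2k}\le (c/(c+\varepsilon))^{2k}\to 0$ is exactly the step needed to close that gap before Weierstrass can be used, and you supply it correctly.
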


\begin{remark}
    Let $\mu_{n}$ and $\mu $ be probability measures on $\IR$.
    Recall that weak convergence of the measures $\mu_{n}$ to $\mu $ means that
    \begin{align}
       \int f(x)\,d\mu_{n}(x)~\Rightarrow~\int f(x)\,d\mu(x)
    \end{align}
    for all bounded, continuous functions $f$ on $\IR$.
    
   The above theorem is true and, in fact, well known if the support condition is replaced by the much weaker assumption that the moments $m_{k}(\mu)$ (resp. the numbers $m_{k}$) do not grow too fast as $k\to\infty$
   (see \cite{Klenke} or \cite{Kirsch} for details).
\end{remark}

\begin{proof}
   We sketch the proof, for details see the literature cited above.

   \noindent By Weierstrass approximation theorem the polynomials on $I=[a-1,b+1]$ are uniformly dense in the space of continuous functions on $I$. Hence the integral
   $\int f(x) d\mu(x)$ for continuous $f$ can be computed from the knowledge of the moments of $\mu$. From this part \ref{b} of the theorem follows.

   Moreover, we get that the integrals $\int f(x)\,d\mu_{n}(x)$ converge for any continuous $f$. The limit is a positive linear functional. Thus the probability measures $\mu_{n}$ converge weakly to a
   measure $\mu $ with
   \begin{equation*}
      \int f(x)\,d\mu_{n}(x)~\to~\int f(x)\,d\mu(x)
   \end{equation*}
   which implies part \ref{a}.
\end{proof}
\section{Proof of de Finetti's Theorem}

The following theorem is a substitute for a (very weak) law of large numbers.
\begin{theorem}\label{Fmeas}
  Let $X_{i}$ be an infinite sequence of $\{0,1\}$-valued exchangeable random variables then
  $S_{N}:=\frac{1}{N}\sum_{i=1}^{N}\,X_{i}$ converges in distribution to a probability measure $\mu $.\\
  $\mu$ is concentrated on $[0,1]$ and its moments are given by
  \begin{equation}
     m_{k}(\mu)~=~\IE\Big(X_{1}\cdot X_{2}\cdot\ldots\cdot X_{k}\Big)
  \end{equation}
  where $\IE$ denotes expectation with respect to $\IP$.
\end{theorem}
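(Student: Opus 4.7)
The plan is to apply the method of moments as provided by Proposition \ref{bdsupp}. Let $\mu_{N}$ denote the distribution of $S_{N}$. Since $X_{i}\in\{0,1\}$, each $S_{N}$ takes values in $[0,1]$, so every $\mu_{N}$ has support in the fixed interval $[0,1]$. By Proposition \ref{bdsupp}\ref{a}, to prove weak convergence of $\mu_{N}$ to some probability measure $\mu$ on $[0,1]$, it suffices to show that for every $k\in\IN$ the moments $m_{k}(\mu_{N}) = \IE(S_{N}^{k})$ converge, and to identify the limit.

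First I would expand
\begin{equation*}
\IE(S_{N}^{k}) ~=~ \frac{1}{N^{k}}\sum_{i_{1},\ldots,i_{k}=1}^{N}\IE(X_{i_{1}}X_{i_{2}}\cdots X_{i_{k}}).
\end{equation*}
The key observation is that since $X_{i}\in\{0,1\}$ we have $X_{i}^{2}=X_{i}$, so the value of $X_{i_{1}}\cdots X_{i_{k}}$ depends only on the \emph{set} of distinct indices appearing in the tuple $(i_{1},\ldots,i_{k})$. Combined with exchangeability, this means $\IE(X_{i_{1}}\cdots X_{i_{k}})=\IE(X_{1}X_{2}\cdots X_{j})$ whenever the tuple uses exactly $j$ distinct indices. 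Denote this common value by $f(j)$.

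Next I would count: the number of $k$-tuples from $\{1,\ldots,N\}$ using exactly $j$ distinct values equals $\binom{N}{j}\,j!\,S(k,j)$, where $S(k,j)$ is a Stirling number of the second kind (the number of surjections $\{1,\ldots,k\}\to\{1,\ldots,j\}$ is $j!\,S(k,j)$). Hence
\begin{equation*}
\IE(S_{N}^{k}) ~=~ \sum_{j=1}^{k}\frac{\binom{N}{j}\,j!\,S(k,j)}{N^{k}}\,f(j).
\end{equation*}
As $N\to\infty$, the coefficient $\binom{N}{j}j!/N^{k}$ is of order $N^{j-k}$, which tends to $0$ for $j<k$ and tends to $1$ for $j=k$ (where $S(k,k)=1$). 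Therefore $\IE(S_{N}^{k})\to f(k)=\IE(X_{1}\cdots X_{k})$.

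Setting $m_{k}:=\IE(X_{1}\cdots X_{k})$, Proposition \ref{bdsupp}\ref{a} yields a probability measure $\mu$ on $[0,1]$ with $m_{k}(\mu)=m_{k}$ and $\mu_{N}\Rightarrow\mu$; this is exactly convergence in distribution of $S_{N}$ to $\mu$ with the stated moments. No real obstacle is expected here: the proof is essentially a combinatorial bookkeeping exercise made clean by the identity $X_{i}^{2}=X_{i}$ and exchangeability. The only mild care needed is to justify that only the ``all distinct'' term survives in the limit, which follows from the $N^{j-k}$ scaling above.
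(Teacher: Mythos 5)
Your proof is correct and takes essentially the same approach as the paper: expand $\IE(S_{N}^{k})$ over index tuples, group by the number of distinct indices, use $X_{i}^{2}=X_{i}$ and exchangeability to reduce each term to $\IE(X_{1}\cdots X_{j})$, show that only the all-distinct tuples survive after dividing by $N^{k}$, and then invoke Proposition \ref{bdsupp}. The only difference is cosmetic: you count the tuples with $j<k$ distinct indices exactly via Stirling numbers, whereas the paper settles for the crude bound $(k-1)^{k}N^{k-1}$, which suffices.
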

\begin{definition}\label{defFmeas}
   We call the measure $\mu$ associated with $X_{i}$ according to Theorem \ref{Fmeas} the \emph{de Finetti measure} of $X_{i}$.
\end{definition}
\begin{proof}{(Theorem \ref{Fmeas})}

   To express the moments of $S_{N} $ we compute
   \begin{align}\label{binomi}
      \Big(\sum_{i=1}^{N}X_{i}\Big)^{k}~&=~\sum_{(i_{1},\ldots,i_{k})\in\{1,\ldots,N\}^{k}} X_{i_{1}}\cdot X_{i_{2}}\cdot\ldots\cdot X_{i_{k}}
   \end{align}
   To simplify the evaluation of the above sum we introduce the number of different indices in $(i_{1},\ldots,i_{k})$ as
   \begin{align}\label{eq:rho}
      \rho(i_{1},i_{2},\ldots,i_{k})~=~\#\{i_{1},i_{2},\ldots,i_{k}\}
   \end{align}
  Consequently

   \begin{align}
      \eqref{binomi}&=~\sum_{r=1}^{k}\,\sum_{\overset{(i_{1},\ldots,i_{k})\in\{1,\ldots,N\}^{k}}{\rho(i_{1},\ldots.i_{k})=r}} X_{i_{1}}\cdot X_{i_{2}}\cdot\ldots\cdot X_{i_{k}}
   \end{align}
   Thus we may write
   \begin{align}
      \IE\Big(\big(\oo{N}\sum_{i=1}^{N}X_{i}\big)^{k}\Big)~=~~&\oo{N^{k}}\,\sum_{\overset{i_{1},\ldots,i_{k}=1}{\rho(i_{1},\ldots,i_{k})=k}}^{N} \IE\Big(X_{i_{1}}\cdot X_{i_{2}}\cdot\ldots\cdot X_{i_{k}}\Big)\notag\\
      +\,&\oo{N^{k}}\,\sum_{\overset{i_{1},\ldots,i_{k}=1}{\rho(i_{1},\ldots,i_{k})<k}}^{N} \IE\Big(X_{i_{1}}\cdot X_{i_{2}}\cdot\ldots\cdot X_{i_{k}}\Big)\label{SN}
       \end{align}
       There are at most $(k-1)^{k}\,N^{k-1}$ index tuples $(i_{1},\ldots,i_{k})$ with $\rho(i_{1},\ldots,i_{k})<k$. Indeed, we have $N^{k-1}$ possibilities  to chose the possible indices (`candidates') for $(i_{1},\ldots,i_{k})$. Then for each of the $k$ positions in the $k$-tuple we may chose one of the $k-1$ candidates which gives $(k-1)^{k}$ possibilities. This covers also tuples with less than $k-1$ different indices as
       some of the candidates may finally not appear in the tuple.  It follows that the second term in \eqref{SN} goes
   to zero. So
      \begin{align}
      \IE\Big(\big(\oo{N}\sum_{i=1}^{N}\big)^{k}\Big)~&\approx~\oo{N^{k}}\sum_{\overset{i_{1},\ldots,i_{k}=1}{\rho(i_{1},\ldots,i_{k})=k}}^{N} \IE\Big(X_{i_{1}}\cdot X_{i_{2}}\cdot\ldots\cdot X_{i_{k}}\Big)\,,\notag\\
      \intertext{so using exchangeability:}
      &=~\oo{N^{k}}\sum_{\overset{i_{1},\ldots,i_{k}=1}{\rho(i_{1},\ldots,i_{k})=k}}^{N}\,\IE\Big(X_{1}\cdot X_{2}\cdot\ldots\cdot X_{k}\Big)\notag\\
      &\approx~\IE\Big(X_{1}\cdot X_{2}\cdot\ldots\cdot X_{k}\Big)
   \end{align}
     An application of Proposition \ref{bdsupp} gives the desired result.
\end{proof}

We note a Corollary to the  Theorem \ref{Fmeas} or better to its proof.
\begin{corollary}\label{cor}
   If $\{X_{i}\} $ is an exchangeable sequence of $\{0,1\} $-valued random variables and 
   \begin{align}\notag
   r=\rho(i_{1},i_{2},\ldots,i_{k})~=~\#\{i_{1},i_{2},\ldots,i_{k}\}
   \end{align}
   then
   \begin{align}
      \IE\Big(X_{i_{1}}\cdot X_{i_{2}}\cdot\ldots\cdot X_{i_{k}}\Big)~=~\IE\Big(X_{1}\cdot X_{2}\cdot\ldots\cdot X_{r}\Big)
   \end{align}
\end{corollary}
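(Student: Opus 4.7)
The plan is to observe that for $\{0,1\}$-valued random variables, repeating an index in a product is irrelevant, so the corollary reduces to a direct application of exchangeability. The proof of Theorem \ref{Fmeas} already exploited this implicitly in the step that invoked exchangeability, so the corollary is essentially an extraction of that idea.

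First I would use the fundamental identity $X_{i}^{m}=X_{i}$ for all $m\geq 1$, which holds because $X_i\in\{0,1\}$. Let $j_{1}<j_{2}<\ldots<j_{r}$ be the distinct values appearing in the tuple $(i_{1},i_{2},\ldots,i_{k})$, so that $r=\rho(i_{1},\ldots,i_{k})$. Grouping the factors in $X_{i_{1}}\cdot X_{i_{2}}\cdot\ldots\cdot X_{i_{k}}$ according to which of the $j_{s}$'s they equal, and collapsing each group using $X_{j_{s}}^{m}=X_{j_{s}}$, yields the pointwise identity
\begin{equation*}
X_{i_{1}}\cdot X_{i_{2}}\cdot\ldots\cdot X_{i_{k}}~=~X_{j_{1}}\cdot X_{j_{2}}\cdot\ldots\cdot X_{j_{r}}.
\end{equation*}

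Next I would invoke exchangeability to rewrite the right-hand side in the canonical form. Pick $N$ larger than $\max\{j_{1},\ldots,j_{r},r\}$ and choose any permutation $\pi$ of $\{1,2,\ldots,N\}$ satisfying $\pi(s)=j_{s}$ for $s=1,\ldots,r$. The exchangeability of $X_{1},\ldots,X_{N}$ implies that the joint distribution of $(X_{\pi(1)},\ldots,X_{\pi(N)})$ coincides with that of $(X_{1},\ldots,X_{N})$, and in particular
\begin{equation*}
\IE\big(X_{j_{1}}\cdot\ldots\cdot X_{j_{r}}\big)~=~\IE\big(X_{\pi(1)}\cdot\ldots\cdot X_{\pi(r)}\big)~=~\IE\big(X_{1}\cdot\ldots\cdot X_{r}\big).
\end{equation*}
Taking expectations in the pointwise identity above and combining with this equality gives the claim.

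There is no real obstacle here; the proof is essentially two lines. The only point worth flagging is that the $\{0,1\}$ assumption is used in a nontrivial way: without it one would only obtain $X_{j_{1}}^{a_{1}}\cdots X_{j_{r}}^{a_{r}}$ with multiplicities $a_{s}$ summing to $k$, and the corollary would fail. Thus the corollary is a feature specific to the Bernoulli setting, just like de Finetti's theorem as stated here.
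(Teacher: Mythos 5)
Your proof is correct and follows essentially the same route as the paper: collapse repeated factors using $X_{i}^{\ell}=X_{i}$ (valid since $X_{i}\in\{0,1\}$) to reduce the product to $r$ distinct factors, then apply exchangeability to move those $r$ indices to $1,\ldots,r$. You simply spell out the permutation argument in more detail than the paper does.
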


\begin{proof}
   Since $X_{i}\in\{0,1\}$ we have ${X_{i}}^{\ell}~=X_{i}$ for all $\ell\in\IN, \ell\geq 1$, Hence, the product in the left hand side is actually a product of $r$ different
   $X_{j}$, the expectation of which equals the right hand side due to exchangeability.
\end{proof}

For the proof of Theorem \ref{deFin} we will use the following simple lemma.
\begin{lemma}\label{sum}
   Suppose $\{X_{i}\}_{i\in\IN}$ is a $\{0,1\}$-valued exchangeable sequence. Then for pairwise distinct $i_{1},i_{2},\ldots,i_{k}\in\IN$ and $x_{1},\ldots,x_{k}\in\{0,1\}$
   with $\sum x_{i}=m$
   \begin{align}
      \IP\Big(X_{i_{1}}=x_{1},\ldots,X_{i_{k}}=x_{k}\Big)~=~\oo{\binom{k}{m}}\;\IP\Big(\sum_{i=1}^{k}X_{i}=m\Big)
   \end{align}
\end{lemma}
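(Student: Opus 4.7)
The plan is to use exchangeability twice: first to normalize the index set, and then to recognize that among the $2^k$ binary patterns of length $k$, the probability of each pattern depends only on its Hamming weight.

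First I would invoke exchangeability to reduce to the case $i_j = j$. Since any permutation of the indices $\{1,\ldots,N\}$ (for $N$ large enough to contain all the $i_j$) preserves joint distributions, I can find a permutation that sends $i_j \mapsto j$, so it suffices to show
\begin{equation*}
\IP\bigl(X_1 = x_1, \ldots, X_k = x_k\bigr) = \frac{1}{\binom{k}{m}}\,\IP\Bigl(\sum_{i=1}^k X_i = m\Bigr).
\end{equation*}

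Next I would argue that, by exchangeability again, this probability depends only on $m = \sum x_j$ and not on the particular pattern $(x_1,\ldots,x_k)$. Indeed, if $(y_1,\ldots,y_k)$ is any other $\{0,1\}$-tuple with $\sum y_j = m$, then a permutation of $\{1,\ldots,k\}$ carries the positions of the $1$'s in $x$ to the positions of the $1$'s in $y$, so the two joint probabilities agree. Denote this common value by $q_m$.

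Finally, the event $\{\sum_{i=1}^k X_i = m\}$ decomposes as a disjoint union over the $\binom{k}{m}$ tuples $(y_1,\ldots,y_k) \in \{0,1\}^k$ with $\sum y_j = m$, each of which has probability $q_m$. Therefore
\begin{equation*}
\IP\Bigl(\sum_{i=1}^k X_i = m\Bigr) = \binom{k}{m}\,q_m,
\end{equation*}
and solving for $q_m$ gives the claim. There is no real obstacle here; the lemma is a direct consequence of exchangeability combined with counting the number of binary strings of a given weight.
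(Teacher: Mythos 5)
Your proof is correct and follows essentially the same route as the paper's: the paper simply notes that the $\binom{k}{m}$ patterns of weight $m$ all have equal probability by exchangeability and together partition the event $\{\sum_{i=1}^k X_i = m\}$. Your version spells out the additional (harmless) step of first moving the arbitrary distinct indices $i_1,\ldots,i_k$ to $1,\ldots,k$, which the paper leaves implicit.
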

\begin{proof}
   There are $\binom{k}{m}$ tuples $x_{1},\ldots,x_{k}$ with $\sum x_{i}=m$. Due to exchangeability they all lead to the same probability.
\end{proof}

We now prove Theorem \ref{deFin}.

\begin{proof}{(Theorem \ref{deFin})}

Let $\mu $ be the de Finetti measure of $X_{i}$ (see Definition \ref{defFmeas}) and define a $\{0,1\}$-valued process $\{Y_{i}\}_{i}$ by
\begin{align}\label{defY}
   \IP\Big( Y_{1}=y_{1},\ldots, Y_{k}=y_{k}\Big)~=~\int \prod_{i=1}^{k}\pi_{p}(y_{i})\,d\mu(p)
\end{align}
The process $Y_{i}$ is obviously exchangeable.               

We'll prove that $X_{i}$ and $Y_{i}$ have the same finite dimensional distributions.

According to Lemma \ref{sum} it suffices to show that $S_{N}=\sum_{i=1}^{N}X_{i}$ and $T_{N}=\sum_{i=1}^{N}Y_{i}$ have the same distributions for all $N$ and for
this it is enough by Proposition \ref{bdsupp} to prove that their moments agree.
\begin{align}
   \IE\Big({S_{N}}^{k}\Big)~&=~\sum_{r=1}^{k}\,\sum_{\overset{i_{1},\ldots,i_{k}=1}{\rho(i_{1},\ldots,i_{k})=r}}^{N}\,\IE\big(X_{i_{1}}\cdot\ldots\cdot X_{i_{k}}\big)\notag\\
   &=~\sum_{r=1}^{k}\,\sum_{\overset{i_{1},\ldots,i_{k}=1}{\rho(i_{1},\ldots,i_{k})=r}}^{N}\,\IE\big(X_{1}\cdot\ldots\cdot X_{r}\big) \tag{\text{by Corollary \ref{cor}}}\\
   &=~\sum_{r=1}^{k}\,\sum_{\overset{i_{1},\ldots,i_{k}=1}{\rho(i_{1},\ldots,i_{k})=r}}^{N}\,\int\,\prod_{i=1}^{r}\pi_{p}(x_{i})\,d\mu(p)
   \tag{\text{by Theorem \ref{Fmeas}}}\\
   &=~\sum_{r=1}^{k}\,\sum_{\overset{i_{1},\ldots,i_{k}=1}{\rho(i_{1},\ldots,i_{k})=r}}^{N}\,\IE\big(Y_{1}\cdot\ldots\cdot Y_{r}\big)\tag{by \eqref{defY}}\\
   &=~\sum_{r=1}^{k}\,\sum_{\overset{i_{1},\ldots,i_{k}=1}{\rho(i_{1},\ldots,i_{k})=r}}^{N}\,\IE\big(Y_{i_{1}}\cdot\ldots\cdot Y_{i_{k}}\big)\tag{Corollary \ref{cor}}\\
   ~&=~ \IE\Big({T_{N}}^{k}\Big)
\end{align}
\end{proof}
%\end{document}

\bigskip\bigskip

\noindent
\begin{tabular}{lcl}
\textbf{Werner Kirsch}&\quad& \texttt{werner.kirsch@fernuni-hagen.de}
\end{tabular}
\end{document}